\let\emph\undefined
\newcommand{\emph}[1]{\textsl{#1}}
\numberwithin{equation}{section}
\numberwithin{equation}{section}
\newtheoremstyle{style1}
{13pt}
{13pt}
{}
{}
{\normalfont\bfseries}
{.}
{.5em}
{}
\theoremstyle{style1}
\newtheorem{definition}{Definition}[section]
\newtheorem{remark}[definition]{Remark}
\newcommand{\catf}[1]{{\mathsf{#1}}}
\newtheoremstyle{style2}
{13pt}
{13pt}
{\slshape}
{}
{\normalfont\bfseries}
{.}
{.5em}
{}
\theoremstyle{style2}
\newtheorem{lemma}[definition]{Lemma}
\newtheorem{theorem}[definition]{Theorem}
\newtheorem{proposition}[definition]{Proposition}
\newtheorem{corollary}[definition]{Corollary}
\newcommand{\spaceplease}{\needspace{5\baselineskip}}
\newcommand{\DW}{\operatorname{\normalfont DW}}
\newcommand{\Red}{\catf{Red}}
\newcommand{\cen}{\catf{C}}
\newcommand{\tft}{\catf{TFT}}
\newcommand{\gtft}{G\text{-}\catf{TFT}}
\newcommand{\PBun}{\catf{PBun}}
\newcommand{\Aut}{\operatorname{Aut}}
\newcommand{\id}{\operatorname{id}}
\newcommand{\ev}{\operatorname{ev}}
\newcommand{\fvs}{\catf{FinVect}}
\newcommand{\vs}{\catf{Vect}}
\newcommand{\Tvs}{{2\catf{Vect}}}
\newcommand{\U}{\text{\normalfont U}}
\newcommand{\Cob}{{\catf{Cob}}}
\let\to\undefined
\newcommand{\to}{\longrightarrow}
\let\mapsto\undefined
\newcommand{\mapsto}{\longmapsto}
\newcommand{\TVectBun}{2\catf{VecBunGrpd}}
\newcommand{\Par}{\catf{Par}}
\let\colon\undefined\newcommand{\colon}{:}
\DeclareMathSymbol{\Phiit}{\mathalpha}{letters}{"08} 
\DeclareMathSymbol{\Psiit}{\mathalpha}{letters}{"09}
\DeclareMathSymbol{\Sigmait}{\mathalpha}{letters}{"06}
\DeclareMathSymbol{\Xiit}{\mathalpha}{letters}{"04}
\DeclareMathSymbol{\Piit}{\mathalpha}{letters}{"05}\let\Pi\undefined\newcommand{\Pi}{\Piit}
\DeclareMathSymbol{\Gammait}{\mathalpha}{letters}{"00}
\DeclareMathSymbol{\Omegait}{\mathalpha}{letters}{"0A}\let\Omega\undefined\newcommand{\Omega}{\Omegait}
\DeclareMathSymbol{\Upsilonit}{\mathalpha}{letters}{"07}
\DeclareMathSymbol{\Thetait}{\mathalpha}{letters}{"02}
\DeclareMathSymbol{\Lambdait}{\mathalpha}{letters}{"03}\let\Lambda\undefined\newcommand{\Lambda}{\Lambdait}
\let\Phi\undefined\newcommand{\Phi}{\Phiit}
\let\Sigma\undefined\newcommand{\Sigma}{\Sigmait}
\let\Psi\undefined\newcommand{\Psi}{\Psiit}
\let\Gamma\undefined\newcommand{\Gamma}{\Gammait}
\begin{document}

	\vspace*{-2cm}
	\begin{flushright}
		\small
		{\sf [ZMP-HH/20-9]} \\
		\textsf{Hamburger Beiträge zur Mathematik Nr.~833 }\\
		\textsf{April 2020}
	\end{flushright}
	
	\vspace{5mm}
	
	\begin{center}
		\textbf{\LARGE{Dimensional Reduction,  Extended Topological \\[0.5ex] Field Theories   and Orbifoldization }}\\
		\vspace{1cm}
		{\large Lukas Müller $^{a}$} \ \ and \ \ {\large Lukas Woike $^{b}$}
		
		\vspace{5mm}
		
	{\slshape $^a$ Max-Planck-Institut f\"ur Mathematik\\
Vivatsgasse 7, D\,--\,53111 Bonn}\\
{\tt lmueller4@mpim-bonn.mpg.de\ }
	\\[7pt]
		{\slshape $^b$ Fachbereich Mathematik\\ Universit\"at Hamburg\\
			Bereich Algebra und Zahlentheorie\\
			Bundesstra\ss e 55, \ D\,--\,20\,146\, Hamburg }\\
		{\tt  lukas.jannik.woike@uni-hamburg.de\ }
	\end{center}
	\vspace{0.3cm}
	\begin{abstract}\noindent 
	We prove a  decomposition formula for the dimensional reduction of an extended topological field theory that arises as an orbifold of an equivariant topological field theory. Our decomposition formula can be expressed in terms of a categorification of the integral with respect to groupoid cardinality. The application of our result to topological field theories of Dijkgraaf-Witten type proves a recent conjecture of Qiu-Wang.
	\end{abstract}
	
	\tableofcontents
\section{Introduction}
This short note is concerned with the application of 
recently developed orbifoldization techniques for extended topological field theories to the subject of dimensional reduction.
More concretely, we prove a decomposition formula for the dimensional reduction of a topological orbifold theory.

Topological  field theories 
form a class of (quantum) field  theories that can be mathematically axiomatized as symmetric monoidal functors out of the cobordism category \cite{atiyah}. For their relevance in physics, we refer e.g.\ to \cite{kapustin} and to \cite{rowellwang}  more specifically for applications in quantum computing. 

Additionally, topological field theories have attracted interest in pure mathematics 
thanks to the manifold invariants they provide \cite{rt2,turaevviro}
and the deep connections between three-dimensional topological field theory and representation theory  \cite{turaev1,BDSPV153D} that allow us to use
low-dimensional topology
for the
 study of algebraic objects like modular categories.
Moreover, extended topological field theories and the cobordism hypothesis have become one of the driving forces behind many important developments in the theory of higher categories \cite{lurietft}.

Topological field theories come in different flavors. In particular, there are numerous interesting and well-studied types of topological field theories featuring decorated cobordisms. In the monograph \cite{turaevhqft}, this decoration is chosen to be a map to some topological space, the \emph{target space}. 
When the target space is the classifying space $BG$ of a (finite) group $G$, this leads to \emph{$G$-equivariant topological field theories}. Constructions of classes of examples are available in \cite{htv,maiernikolausschweigerteq,hrt,MuellerWoike1}. Of course, the decoration with maps to $BG$ amounts to a decoration with $G$-bundles. 
In this note, we will be concerned with once-extended (in the sequel, just `extended' for brevity) $G$-equivariant topological field theories
which are formally described as   symmetric monoidal functors $Z:G\text{-}\Cob_n \to \Tvs$ from the symmetric monoidal oriented $G$-cobordism bicategory $G\text{-}\Cob_n$ to the symmetric monoidal bicategory of Kapranov-Voevodsky 2-vector spaces
(these will always be over the complex numbers in this note). As in \cite{turaevhqft}, these functors  additionally have to satisfy a homotopy invariance property.

In the papers \cite{swofk,swpar,extofk}, a topological orbifold  construction has been developed. For a given finite group $G$, this construction assigns to an extended 
 $G$-equivariant topological field theory of any dimension a non-equivariant topological field theory of the same dimension, i.e.\
 it takes the form of a functor\begin{align}
 - / G \ : \ \gtft \to \tft \ , \quad Z \mapsto Z/G \ . 
 \end{align}
 The image of a $G$-equivariant topological field theory $Z$ under this functor is called its \emph{orbifold theory} and denoted by $Z/G$.
 The construction combines the computation of homotopy fixed points with the integration over essentially finite groupoids.
When restricted to the circle in the three-dimensional case, the topological orbifold construction amounts to the categorical orbifoldization \cite{mueger,kirrilovg04,centerofgradedfusioncategories} thereby opening a profitable topological perspective on this important algebraic operation.

The purpose of this note is to give a decomposition formula for the dimensional reduction of such an orbifold theory $Z/G$.
For a fixed closed oriented
manifold $T$ of dimension $r$, the dimensional reduction on $T$ (hereafter also referred to as \emph{$T$-reduction})
assigns to an $n+r$-dimensional extended  topological field theory $E$
an $n$-dimensional extended topological field theory $\Red_T E$ by evaluation of $E$ on manifolds of the form $M\times T$, where $M$ is of dimension $n$, $n-1$ and $n-2$, i.e.\ $\Red_T E = E(-\times T)$.  

In order to formulate our main result,
we extend the notion of dimensional reduction to \emph{equivariant} topological field theories such that 
for a given $n+r$-dimensional $G$-equivariant topological field theory $Z:G\text{-}\Cob_{n+r} \to \Tvs$,  the $T$-reduction can be seen to produce a family
of equivariant topological field theory indexed by all $G$-bundles over $T$. The family member $(\Red_T Z)^P$ belonging to a $G$-bundle $P\to T$ is an $\Aut(P)$-equivariant topological field theory of dimension $n$. Here $\Aut(P)$ is the group of bundle automorphisms of $P$.

Our main result asserts that the $T$-reduction of $Z/G$ is a sum over the $\Aut(P)$-orbifolds of the theories $(\Red_T Z)^P$:
\\[2ex]
\noindent {\textbf{Theorem~\ref{thmdimred2}.} \slshape
		Let $T$ be a closed oriented $r$-dimensional manifold, $G$ a finite group and $Z\colon G\text{-}\Cob_{n+r}\longrightarrow \Tvs$ an $n+r$-dimensional extended $G$-equivariant topological field theory. Then we have a canonical isomorphism
	\begin{align}
	\Red_T \frac{Z}{G} \cong \bigoplus_{ \substack{ \text{isomorphism classes $[P]$} \\ \text{of $G$-bundles over $T$} }  }    \frac{(\Red_T Z)^P}{\Aut(P)}      
	\end{align}
	of $n$-dimensional extended topological field theories.\normalsize} \\[2ex]
From the assumptions, it follows that this is a finite decomposition.

At the heart of the proof lies a compatibility of dimensional reduction and orbifoldization (Theorem~\ref{Thm:main}).
The right hand side can be interpreted as a categorified integral with respect to groupoid cardinality (Remark~\ref{remcatint}).

We apply and illustrate our results for theories of Dijkgraaf-Witten type \cite{dw,freedquinn,morton1} which naturally have an orbifold description \cite{swofk,MuellerWoike1}: Let $\DW_G^\omega$ be the $n+r$-dimensional extended Dijkgraaf-Witten theory for a finite group $G$ and cohomological twist $\omega \in Z^{n+r}(BG;\U(1))$. Then its reduction on a closed oriented $r$-dimensional manifold $T$ is given by
\begin{align}
\Red_T \DW_G^\omega  \cong \bigoplus_{ \substack{ \text{isomorphism classes $[P]$} \\ \text{of $G$-bundles over $T$} }  } \DW_{\Aut(P)}^{\tau \omega|_{   \Aut (P)  }}   \ ,
\end{align}
where $\tau \omega|_{   \Aut (P)  }$ is the restriction of the transgression $\tau \omega \in Z^n(BG^T;\U(1))$ of $\omega$ to the space $BG^T$ of maps $T\to BG$, see \cite{willerton} for the concept of transgression. 
In the case of $T=\mathbb{S}^1$ and trivial $\omega$, we obtain
\begin{align}
\Red_{    \mathbb{S}^1       } \DW_G \cong \bigoplus_{[g] \in G/G   }  \DW_{\cen(g)}  
\end{align}
and thereby express the dimensional reduction of the Dijkgraaf-Witten theory for $G$ as a sum over  Dijkgraaf-Witten theories for the centralizer subgroups $\cen(g)$, where $g$ runs over all conjugacy classes of $G$. 
This decomposition result was recently conjectured by Qiu-Wang in \cite{qw} for a reduction from dimension $(4,3,2)$ to $(3,2,1)$.

		\subparagraph{Acknowledgments.}
	We would like to thank 
	  Christoph Schweigert
	for helpful discussions. LM gratefully acknowledges support from the Max-Planck-Institute for Mathematics in Bonn.
	LW is supported by the RTG 1670 ``Mathematics inspired by String theory and Quantum
	Field Theory'' of the DFG.

\section{Equivariant field theories and their orbifolds}
In this section,
 we provide a brief and informal introduction to extended 
equivariant field theories and the topological orbifold construction. We try to give the
reader a general idea of the concepts involved. For more details,
we will give references to the literature.
Along the way, we will also 
present some slight extensions, in particular the technical Lemma~\ref{lemorbsum}.

\subsection{Equivariant field theories}\label{Sec: ETFT}
An \emph{$n$-dimensional topological  field theory}
(depending on the context also referred to as topological \emph{quantum} field theory)
as  defined by Atiyah~\cite{atiyah}
is a symmetric monoidal functor $Z\colon \Cob_n \longrightarrow \vs
$ from the symmetric monoidal category $\Cob_n$ of $n$-dimensional bordisms 
to the symmetric monoidal category $\vs$ of vector spaces (which in this note will always be over the complex numbers). 
Objects in $\Cob_n$ are $n-1$-dimensional closed oriented manifolds $\Sigma$ (we will work with \emph{oriented} manifolds throughout). A morphism $M:\Sigma_0 \to \Sigma_1$ is an equivalence class of compact 
oriented bordisms from $\Sigma_0$ to $\Sigma_1$, i.e.\ an equivalence class of compact oriented
$n$-dimensional manifolds $M$ with the choice of an orientation-preserving diffeomorphism 
$-\Sigma_0 \sqcup \Sigma_1 \cong \partial M$, where we denote by $-\Sigma_0$ the manifold $\Sigma_0$ with the reversed orientation.
Composition is given by gluing of
manifolds along boundaries. 

Over the last decades,
 this definition has been   generalized into various
directions. Two of those generalizations will be particularly relevant for us in the sequel:

\subparagraph{Equivariant topological field theories.} All manifolds appearing in the definition of $\Cob_n$ can be compatibly equipped with a map to the classifying space $BG$ of some (finite) group $G$ \cite{turaevhqft,htv,hrt}. This leads to a flavor of topological field theory that we will refer to as \emph{$G$-equivariant topological field theory}.
	
	\subparagraph{Extended topological field theories.} 
	By taking into account not only manifolds of dimension $n$ and $n-1$, but also $n-2$, one can define a symmetric monoidal \emph{bicategory} of cobordisms; we refer to \cite{schommerpries} for its definition and also for the notion of a symmetric monoidal bicategory in general. By slight abuse of notation, we will denote the bicategory of cobordisms by the same symbol $\Cob_n$ as its 1-categorical counterpart. For the rest of this note, $\Cob_n$ will always denote the $n$-dimensional cobordism \emph{bicategory}.
	
	 The symmetric monoidal category of vector spaces will also be replaced by an appropriate categorification. For our purposes, this will be the symmetric monoidal bicategory  $\Tvs$ of \emph{Kapranov-Voevodsky 2-vector spaces} \cite{KV,mortonvec}. Such a 2-vector space is a complex linear semisimple additive category with finite-dimensional morphism spaces and a finite number of isomorphism classes
	of simple objects. The 1-morphisms in $\Tvs$
	are linear functors, the 2-morphism are natural transformation. The monoidal 
	product is the Deligne product. Its monoidal unit is the category $\fvs$ of finite-dimensional complex vector spaces.
Now one defines an \emph{$n$-dimensional extended topological field theory} as a symmetric monoidal functor $\Cob_n \to \Tvs$
(a (symmetric monoidal) functor between (symmetric monoidal) bicategories is here to be understood as a (symmetric monoidal) 2-functor in the weak sense, in particular with plenty of coherence data).

In fact, one can replace $\Cob_n$ by an even higher category that takes into account even lower-dimensional manifolds  down to the point. This leads to \emph{fully extended topological field theories} \cite{lurietft}. In this sense, the extended topological field theories in this note will only be once-extended.  
One reason for our interest in topological field theories that are once-extended (but not non-extended or further extended) is the deep connection 
to representation theory (and particularly modular categories) that this 
 type of field theory exhibits in dimension three \cite{rt2,BDSPV153D}.

In the sequel, we will be interested in topological field theories which are both extended and equivariant as defined and investigated in \cite{extofk}. To this end, one needs to define a symmetric monoidal bicategory $G\text{-}\Cob_n$ of $G$-cobordisms for a group $G$.
We refer to \cite[Definition~2.1]{extofk} for the precise definition. Roughly,
$G\text{-}\Cob_n$ is defined as follows:
\begin{itemize}
\item An object is a closed oriented $n-2$-dimensional manifold $S$ together
with a continuous map $\xi : S\longrightarrow BG$.
\item A 1-morphism $(S_0,\xi_0)\longrightarrow (S_1,\xi_1)$ is a cobordism
$\Sigma \colon S_0 \longrightarrow S_1$ between $S_0$ and $S_1$ equipped with a continuous map
$\varphi\colon \Sigma \longrightarrow BG$ such that the diagram
\begin{equation}
\begin{tikzcd}
 & BG & \\ \\ 
S_0 \ar[r] \ar[ruu,"\xi_0"] & \Sigma \ar[uu,"\varphi"] & \ar[l] \ar[luu,"\xi_1", swap] S_1 
\end{tikzcd}
\end{equation}  
commutes. Composition is defined by gluing manifolds and continuous maps
along common boundary components.

\item A 2-morphism  $(\Sigma_a,\varphi_a)\longrightarrow (\Sigma_b,\varphi_b)$ 
between two 1-morphisms $(\Sigma_a,\varphi_a)$ and $(\Sigma_b,\varphi_b)$ from $(S_0,\xi_0)$ to $(S_1,\xi_1)$ is an equivalence class of
 a 2-cobordism
 $M\colon \Sigma_a 
\longrightarrow \Sigma_b$, i.e.\ a certain type of manifold with corners and an 
identification of $-\Sigma_a \sqcup \Sigma_b$ with part of its boundary, together
with a continuous map $\psi \colon M \longrightarrow BG$ compatible with 
$\varphi_1$ and $\varphi_2$. Horizontal and vertical composition are defined by the gluing of manifolds.
\end{itemize}  
The disjoint union of manifolds induces a symmetric monoidal structure on 
$G\text{-}\Cob_n$.
\begin{definition}
An \emph{$n$-dimensional extended $G$-equivariant field theory} is a symmetric monoidal functor
$Z:G\text{-}\Cob_n \longrightarrow \Tvs$ which has the \emph{homotopy
invariance property}, i.e.\ it satisfies $Z(M,\psi)=Z(M,\psi')$ for two 2-morphisms $(M,\psi)$ and $(M,\psi')$ between fixed 1-morphisms if $\psi$ is homotopic to $\psi'$ relative boundary.
We denote by $\gtft_n$ the bicategory of $n$-dimensional extended $G$-equivariant topological field theories. Its morphisms are symmetric monoidal transformations.
\end{definition}  

In fact, $\gtft_n$ is a 2-groupoid.

\begin{remark}\label{remdirsum}
The direct sum $Z_1\oplus Z_2$
	of equivariant
	field theories $Z_1,Z_2\colon G\text{-}\Cob_n \longrightarrow \Tvs$
	is again a
	$G$-equivariant topological field theory
	that can be defined using the following prescription from  \cite{durhuus} and \cite[I.1.5.3]{turaevhqft}: On  objects $(S,\xi) \in G\text{-}\Cob_n$ such that $S$ is non-empty and connected, we define  $(Z_1\oplus Z_2) (S,\xi) \coloneqq Z_1(S,\xi)\oplus Z_2(S,\xi) $. For the empty set, we set
	$Z_1\oplus Z_2 (\emptyset)\coloneqq \fvs$
	(this is needed for $Z_1\oplus Z_2$ to be symmetric monoidal). On non-connected manifolds, $Z_1\oplus Z_2$ is defined
	as the tensor product of its values on the connected components. Similarly,
	we define the value of $Z_1\oplus Z_2$ on connected morphisms as the direct
	sum of the independent theories (interpreted in an appropriate sense) 
and 
	extend this definition by tensor products to non-connected morphisms.
	\end{remark}

\begin{remark}\label{remhqft}
	In the above definitions, we can replace the space $BG$ with \emph{any} topological space $T$, see also \cite{extofk}, which in this context will then be referred to as the \emph{target space}. 
	We then obtain a symmetric monoidal bicategory $T\text{-}\Cob_n$ of cobordisms equipped with maps to $T$. A symmetric monoidal functor $T\text{-}\Cob_n \to \Tvs$ having the homotopy invariance property will be called an \emph{extended homotopy quantum field theory with target space $T$} based on the terminology used in \cite{turaevhqft}. These theories form again a 2-groupoid $T\text{-}\tft_n$. If we are given a disjoint union $S\sqcup T$ of spaces $S$ and $T$, the inclusions $S \to S\sqcup T$ and $T \to S\sqcup T$ induce an equivalence
	\begin{align}
	(S\sqcup T)\text{-}\tft_n \xrightarrow{\ \simeq  \ } S\text{-}\tft_n \times T\text{-}\tft_n \ . \label{eqnequivdisj}
	\end{align}
	For the proof, we observe $(S\sqcup T)^X \cong S^X\sqcup T^X$ for any \emph{connected} space $X$, where $Y^X$ denotes the space of maps $X\to Y$ equipped with the compact-open topology.
	Using this, we observe that
	 $(S\sqcup T)\text{-}\Cob_n$ is the coproduct of $S\text{-}\Cob_n$ and $T\text{-}\Cob_n$ as symmetric monoidal bicategories, and from this we deduce that \eqref{eqnequivdisj} is an equivalence.
	\end{remark}

\subsection{The topological orbifold construction}\label{Sec:Orbi}
The main result of~\cite{extofk} is a \emph{topological orbifold construction} for extended 
equivariant field theories: 
\begin{theorem}
For any finite group $G$,
there is a canonical functor
\begin{align}\label{eqnorbifoldconstruction}
- / G \ : \ \gtft_n
\quad 	\xrightarrow{ \ \ -/G \  \ } \quad 
\tft_n \ 
, \quad Z \longmapsto Z/G \ . 
\end{align}\normalsize
This functor is referred to as the {(topological) orbifold construction}. The image $Z/G$ 
of a $G$-equivariant topological field theory $Z$ under this functor is referred to as the orbifold theory of $Z$.
\end{theorem}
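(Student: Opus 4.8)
The plan is to build $Z/G$ as a categorified \emph{pull--push} (parallel transport) along spans of bundle groupoids, integrating out the gauge degrees of freedom by a groupoid integral. The guiding idea is that a bare manifold $X$ in $\Cob_n$ carries not a single $G$-bundle but a whole groupoid $\PBun_G(X)$ of them, and that $Z/G$ should be obtained by summing the equivariant values of $Z$ over this groupoid, weighted by groupoid cardinality. Because $Z$ has the homotopy invariance property, its restriction to the manifolds underlying a fixed bordism depends only on the homotopy class of the decorating map, so it descends to a genuine ($2$-)functor on $\PBun_G(X)\simeq\Pi_1\!\left(BG^X\right)$.

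First I would record that, since $G$ is finite and all manifolds in $\Cob_n$ are compact, every groupoid $\PBun_G(X)$ is essentially finite; this is exactly what makes the groupoid integral available. On an object $S$ I would then define $Z/G(S)$ to be the groupoid integral over $\PBun_G(S)$ of the functor $\PBun_G(S)\to\Tvs$ induced by $Z$. Concretely this realises the two ingredients advertised in the introduction: the homotopy fixed points $Z(S,P)^{h\Aut(P)}$ encode invariance under the automorphisms of each bundle $P$, while the sum over isomorphism classes $[P]$, weighted by $1/\lvert\Aut(P)\rvert$ in the sense of groupoid cardinality, performs the integration. A key structural input here is that for actions of essentially finite groupoids on $\Tvs$ the homotopy limit and homotopy colimit coincide (ambidexterity); this is what renders the cardinality-weighted integral well defined and self-dual.

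Next I would define $Z/G$ on higher cells. A $1$-morphism $\Sigma\colon S_0\to S_1$ gives, by restricting bundles to its two boundary pieces, a span $\PBun_G(S_0)\longleftarrow\PBun_G(\Sigma)\longrightarrow\PBun_G(S_1)$, and $Z/G(\Sigma)$ is the pull--push functor along it; $2$-morphisms are treated analogously using the bundle groupoids of the $2$-cobordisms. Functoriality is then the assertion that composition of these pull--push functors agrees with the pull--push of the glued span. This rests on two facts: gluing of bordisms induces homotopy pullback squares of bundle groupoids (descent for $G$-bundles, equivalently the statement that $BG^{(-)}$ turns the homotopy pushouts defining composition into homotopy pullbacks of mapping spaces), and the groupoid integral obeys a base-change (Beck--Chevalley) identity together with a Fubini theorem for iterated integrals. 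Symmetric monoidality is comparatively formal: $\PBun_G(X\sqcup Y)\simeq\PBun_G(X)\times\PBun_G(Y)$ together with multiplicativity of the integral turns disjoint unions into Deligne products and sends the empty manifold to the unit $\fvs$.

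I expect the genuine obstacle to be bicategorical coherence. Beyond defining $Z/G$ on cells, one must produce all of its invertible structure data---compositors, unitors, and the symmetry constraints---and verify the associativity, unit, and interchange axioms of a symmetric monoidal $2$-functor. This forces the groupoid integral to be functorial in a strong sense: the base-change isomorphisms must be natural and must themselves satisfy compatibility (pasting) conditions, so that the integral behaves coherently with respect to homotopy pullbacks rather than merely matching cardinalities numerically. Organising this coherent base-change calculus for the categorified integral, and checking that the assembled data indeed defines the functor of~\eqref{eqnorbifoldconstruction}, is where the real work lies.
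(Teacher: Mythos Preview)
Your proposal is correct and follows essentially the same route as the paper (which in turn summarizes the construction from \cite{extofk}): the orbifold construction is factored as a change to equivariant coefficients $Z\mapsto\widehat{Z}\colon\Cob_n\to\TVectBun$ (your span-of-bundle-groupoids assignment, packaged as a topological field theory valued in an intermediate bicategory of 2-vector bundles over spans of essentially finite groupoids) followed by the parallel section functor $\Par\colon\TVectBun\to\Tvs$ (your categorified pull--push and groupoid integral, with exactly the base-change and Fubini coherences you anticipate). One small correction: the groupoid-cardinality weights $1/\lvert\Aut(P)\rvert$ enter only at the level of 2-morphisms, where numbers make sense; the object $Z/G(S)$ itself is simply the homotopy limit $\bigoplus_{[P]}Z(S,P)^{h\Aut(P)}$ with no additional weighting.
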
 

As explained in \cite{extofk},
the orbifold construction provides a geometric underpinning and 
  generalization of known algebraic orbifoldization procedures such as the orbifoldization of crossed Frobenius algebras \cite{kaufmannorb} and categorical orbifoldization \cite{mueger,kirrilovg04,centerofgradedfusioncategories} including permutation orbifolds \cite{bantay98}.

We will give a rough description of the orbifold theory $Z/G$ of 
an extended $n$-dimensional $G$-equivariant topological field theory $Z$ with values in complex 2-vector spaces
and refer to \cite{extofk} for the details.
The orbifold construction is the composition of two constructions which are rather independent:

\begin{enumerate}
	
	\item

 In the first step, we assign to $Z$ an ordinary field theory
$\widehat{Z}\colon \Cob_n \longrightarrow \TVectBun$ with values in the symmetric monoidal bicategory
$\TVectBun$ 
of 2-vector bundles over essentially finite groupoids 
and their spans
as defined in~\cite{swpar}.
An object in $\TVectBun$ is an essentially finite groupoid $\Gamma$ (a groupoid with finitely many isomorphism classes and finite automorphism groups) together 
with a functor $\varrho \colon \Gamma \longrightarrow \Tvs$; we refer to this as a \emph{2-vector bundle}. A morphism 
from $(\Gamma_0,\varrho_0)$ to $(\Gamma_1,\varrho_1)$ consists of a span
of essentially finite groupoids 
\begin{equation}
\begin{tikzcd} 
 & \Gamma \ar[ld,"r_0", swap] \ar[rd, "r_1"] & \\
\Gamma_0 & & \Gamma_1
\end{tikzcd}
\end{equation} together with a natural transformation $r_0^*\varrho_0 \to 
r_1^*\varrho_1$, where $r_j^*\varrho_j$ for $j=0,1$ is the pullback of $\varrho_j$ along $r_j$
(in the applications that follow, the groupoids will be groupoids of bundles and the legs of the spans will arise from restriction of bundles to the boundaries of a cobordism). Composition is given by forming homotopy pullbacks. 
The 2-morphisms are equivalence classes of spans of spans of groupoids equipped
with modifications encoding the compatibility of the additional structure on the
spans. 

The field theory $\widehat{Z}$ assigns to an $n-2$-dimensional compact oriented manifold $S$ the
fundamental groupoid $\Pi(S,BG):=\Pi (BG^S)$ of the space $BG^S$ of maps $S\to BG$ equipped with the compact-open topology.
 An object in this groupoid is a continuous map
$S\longrightarrow BG$ and a morphism is a homotopy class of homotopies 
between maps into $BG$. A homotopy $h$ from $\xi_0\colon S \longrightarrow BG$
to $\xi_1 \colon S \longrightarrow BG$ can be seen as a 1-morphism $(S\times [0,1],h)\colon (S,\xi_0)\longrightarrow (S, \xi_1)$
in $G\text{-}\Cob_n$. Evaluating $Z$ on these 1-morphisms yields a 
functor $\Pi(S,BG)\longrightarrow \Tvs$, i.e.\ a 2-vector bundle that we define to be $\widehat{Z}(S)$.
The topological field theory $\widehat{Z}$ assigns to a 1-morphism $\Sigma\colon S_0 \longrightarrow S_1$ the span of groupoids
 \begin{equation}
\begin{tikzcd} 
 & \Pi(\Sigma, BG) \ar[ld,"r_0", swap] \ar[rd, "r_1"] & \\
\Pi(S_0,BG) & & \Pi(S_1,BG) \ , 
\end{tikzcd}
\end{equation} where $r_0$ and $r_1$ are the obvious restriction functors.
Evaluating $Z$ on   $\Pi(\Sigma, BG)$ induces the natural 
transformation which is part of the 1-morphism $\widehat{Z}(\Sigma)$.
We refer to the functor \begin{align} \widehat{-}\colon G\text{-}\catf{TFT}\longrightarrow 
[\Cob_n, \TVectBun]^\otimes\end{align} as the \emph{change to equivariant coefficients}. 
Here $[\Cob_n, \TVectBun]^\otimes$ is the 2-groupoid of symmetric monoidal functors $\Cob_n\to\TVectBun$ (i.e.\ $\TVectBun$-valued topological field theories).
For more details
and the definition on 2-morphisms, we refer to~\cite{extofk}.

\item The second ingredient of the orbifold construction is the bicategorical \emph{parallel section functor} $\Par \colon \TVectBun \longrightarrow \Tvs$ constructed in~\cite{swpar}, see \cite{trova} for a 1-categorical analogue. The parallel section functor sends a 2-vector bundle $\varrho \colon \Gamma\longrightarrow \Tvs$ to the homotopy limit of $\varrho$, i.e.\ the 2-vector space of homotopy fixed points. The value of $\Par$ on 1-morphisms and 2-morphisms
is constructed by a pull-push procedure for 2-vector bundles over  spans of bundle groupoids and the integral with respect to groupoid cardinality. 

\end{enumerate}
The orbifold theory is now defined by composing the change to equivariant 
coefficients with the parallel section functor, i.e.\ \eqref{eqnorbifoldconstruction} 
 factorizes as
\begin{align}
G\text{-}\catf{TFT}   \xrightarrow{\  \widehat{-}    \ }
[\Cob_n, \TVectBun]^\otimes \xrightarrow{\  \Par \circ -    \ } \tft_n \ . \label{eqndeforb}
\end{align}
It is clear that the orbifold construction with respect to a finite group $G$ (and all its ingredients) can be generalized to an orbifold construction 
\begin{align}\label{eqnorbifoldconstructionGamma}
\Gamma\text{-}\tft_n
\quad 	\xrightarrow{ \ \ -/\Gamma \  \ } \quad 
\tft_n \ 
, \quad Z \longmapsto Z/\Gamma  
\end{align}
 with respect to an essentially finite groupoid $\Gamma$.  Here we denote by $\Gamma\text{-}\tft_n$ the 2-groupoid of extended homotopy quantum field theories whose target space is the classifying space of the groupoid $\Gamma$ (Remark~\ref{remhqft}).
 The following Lemma says that the orbifold construction is compatible with the disjoint union of essentially finite groupoids:

 \begin{lemma}\label{lemorbsum}
 	For essentially finite groupoids $\Gamma$ and $\Omega$, the square
 		\begin{equation} 
 	\begin{tikzcd}
 	(\Gamma \sqcup \Omega)\text{-}\tft_n \ar[swap]{dd}{-/(  \Gamma\sqcup \Omega   )} \ar[]{rr}{\simeq } & &    \Gamma\text{-}\tft_n \times  \Omega\text{-}\tft_n  \ar{dd}{-/\Gamma\times - /\Omega}   \\
 	& & \\
 	\tft_n   & & \tft_n \times \tft_n \   \ar{ll}{\oplus } 
 	\end{tikzcd}
 	\end{equation} 
 	commutes, where the upper horizontal equivalence is from \eqref{eqnequivdisj}, the vertical arrows are the topological orbi\-foldizations with respect to $\Gamma$, $\Omega$ and $\Gamma\sqcup \Omega$, and the lower horizontal arrow is the direct sum operation from Remark~\ref{remdirsum}. 
 	\end{lemma}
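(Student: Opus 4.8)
The plan is to exploit the factorization \eqref{eqndeforb} of the orbifold construction (in its evident groupoid version) as the change to equivariant coefficients $\widehat{-}$ followed by post-composition with the parallel section functor $\Par$, and to establish the compatibility with disjoint unions separately at each of these two stages. Since all the functors in sight are symmetric monoidal, it suffices to produce the comparison isomorphism on \emph{connected} manifolds of dimensions $n$, $n-1$ and $n-2$ and then propagate it by monoidality; this is precisely the level at which the direct sum of field theories in Remark~\ref{remdirsum} is specified, so matching the two constructions will reduce to a connected computation.

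For $\widehat{-}$: because $B(\Gamma\sqcup\Omega)\simeq B\Gamma\sqcup B\Omega$ and, by the observation recorded in Remark~\ref{remhqft}, $(X\sqcup Y)^S\cong X^S\sqcup Y^S$ for \emph{connected} $S$, every connected $n-2$-dimensional manifold $S$ gives a natural equivalence $\Pi\big(B(\Gamma\sqcup\Omega)^S\big)\simeq \Pi(B\Gamma^S)\sqcup\Pi(B\Omega^S)$, and under it the $2$-vector bundle $\widehat{Z}(S)$ is carried to the pair $\big(\widehat{Z_1}(S),\widehat{Z_2}(S)\big)$, where $(Z_1,Z_2)$ is the image of $Z$ under \eqref{eqnequivdisj}. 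The same splitting applies to the fundamental groupoid of a connected bordism and to its restriction functors to the boundary, so the span of groupoids and the accompanying natural transformation defining $\widehat{Z}$ on a connected $1$-morphism decompose as the disjoint union of the corresponding data for $\widehat{Z_1}$ and $\widehat{Z_2}$, and similarly on connected $2$-morphisms.

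For $\Par$: a $2$-vector bundle over a disjoint union $\Gamma_0\sqcup\Gamma_1$ is precisely a pair $(\varrho_0,\varrho_1)$, and its homotopy limit is $\operatorname{holim}_{\Gamma_0}\varrho_0\times\operatorname{holim}_{\Gamma_1}\varrho_1$, which in $\Tvs$ is the direct sum $\Par(\varrho_0)\oplus\Par(\varrho_1)$ because finite products and coproducts coincide there. On a span of disjoint unions of bundle groupoids the homotopy pullback and the fibrewise pushforward split componentwise with no cross terms, and the groupoid-cardinality integral over a disjoint union is the sum of the integrals over the components; hence $\Par$ sends such a span to the block-diagonal $1$-morphism assembled from the two pieces, and likewise on spans of spans. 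Combining the two stages yields, for connected $S$, a natural isomorphism $\big(Z/(\Gamma\sqcup\Omega)\big)(S)\cong (Z_1/\Gamma)(S)\oplus(Z_2/\Omega)(S)$ and, on connected bordisms and $2$-morphisms, the corresponding block-diagonal identifications; this is exactly what Remark~\ref{remdirsum} prescribes on connected manifolds. On the empty manifold both sides are $\fvs$, and on non-connected manifolds both sides are then forced by monoidality together with the defining rule of Remark~\ref{remdirsum}. Packaging these identifications produces the symmetric monoidal pseudonatural isomorphism filling the square.

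I expect the main obstacle to be organisational rather than conceptual: one has to check that the disjoint-union decompositions of homotopy limits, homotopy pullbacks and of the groupoid-cardinality integral invoked above are natural and coherent enough to be assembled into a symmetric monoidal transformation of symmetric monoidal $2$-functors, and---since the direct sum of field theories in Remark~\ref{remdirsum} is defined by the slightly \emph{ad hoc} rule of taking tensor products over connected components rather than a naive pointwise biproduct---that the output of the orbifold construction genuinely reproduces that rule. Both points are settled by the same principle already used above: $\widehat{-}$ and $\Par$ are symmetric monoidal, so the behaviour on arbitrary manifolds is determined by the connected case, where the splitting is transparent.
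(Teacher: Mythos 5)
Your proposal is correct and takes exactly the route the paper indicates: the paper's own ``proof'' is the single observation that, by the factorization \eqref{eqndeforb} of the orbifold construction through $\widehat{-}$ and $\Par\circ-$, the claim reduces to compatibility of the direct sum with each of these two stages, verified by direct computation. You have simply filled in the details of that computation (the splitting of mapping spaces over connected manifolds, of homotopy limits, and of the pull-push over disjoint unions), consistently with Remarks~\ref{remdirsum} and~\ref{remhqft}.
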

 
 By the definition of the orbifold construction as the composition \eqref{eqndeforb}, the proof amounts to a compatibility of the direct sum with the change to equivariant coefficients and the parallel section functor which can be verified by a direct computation.

As a consequence of Lemma~\ref{lemorbsum}, the orbifold with respect to a possibly non-connected essentially finite groupoid can be expressed as a sum over orbifolds with respect to its connected components and hence as a sum over `ordinary' orbifold theories with respect to a finite group.

\spaceplease

\section{Dimensional reduction of equivariant topological field theories}\label{Sec: Dim}
Dimensional reduction is the process of producing $n$-dimensional field 
theories from $n+r$-dimensional ones by evaluating them on manifolds of 
the form $M\times T$,  where $T$ is a fixed closed (oriented) manifold of dimension $r$.

In the mathematical framework of topological field theories this can be realized
by pulling back a topological field theory $E\colon \Cob_{n+r}\longrightarrow \Tvs$ along the symmetric monoidal functor $-\times T \colon \Cob_{n}\longrightarrow \Cob_{n+r}$ for an $r$-dimensional closed oriented manifold $T$.

This construction does not directly generalize to equivariant field
theories $Z\colon G\text{-}\Cob_{n+r} \longrightarrow \Tvs$ since
there are multiple ways to construct 
a continuous map $M\times T \longrightarrow BG$ even after a map $M\longrightarrow BG$ is fixed. One possible way to 
construct such a map is to use the projection onto $M$. These maps would consequently be 
constant in the  direction of $T$. However, we will instead consider \emph{all} possible
ways to equip $M\times T$ with a $G$-bundle.

In order to make this precise, first observe that 
there is a symmetric monoidal functor 
\begin{align} -\times T \colon BG^T\text{-}\Cob_{n}\longrightarrow G\text{-}\Cob_{n+r}   \label{timesfunctoreqn}\end{align}
sending an object in 
$BG^T\text{-}\Cob_{n}$, i.e.\ an $n-2$-dimensional closed oriented manifold $S$
 equipped with a continuous map $\xi \colon S \longrightarrow 
BG^T$ to the manifold $S\times T$ equipped with the adjoint map
\begin{align}
S\times T \longrightarrow BG \ , \ \ s\times t \longmapsto \xi(s)(t) \ . 
\end{align} 
Note that this just uses the standard adjunction $-\times T \dashv -^T$.
The definition on 1-morphisms and 2-morphisms is accomplished completely analogously.

\begin{definition}
Let $Z\colon G\text{-}\Cob_{n+r} \longrightarrow \Tvs$ be an $n+r $-dimensional extended equivariant field theory and $T$ a closed oriented $r$-dimensional 
manifold. The \emph{$T$-reduction $\Red_T Z \colon BG^T\text{-}\Cob_{n} \longrightarrow \Tvs$} of $Z$ is 
defined as the precomposition
\begin{align}
\Red_T Z : BG^T\text{-}\Cob_{n} \xrightarrow{\  -\times T    \ }       G\text{-}\Cob_{n+r} \xrightarrow{\ Z \ } \Tvs 
\end{align}
 of $Z$ with the functor 
$-\times T $ from \eqref{timesfunctoreqn}. 
The dimensional reduction on $T$ takes the form of a functor
\begin{align} \Red_T : 	G\text{-}\catf{TFT}_{n+r} \to BG^T\text{-}\catf{TFT}_{n} \ . \end{align}
\end{definition}

Our main result is the compatibility of the orbifold construction
with dimensional reduction:

\begin{theorem}\label{Thm:main}
	Let $T$ be a closed oriented $r$-dimensional manifold and $G$ a finite group.
	Then the diagram
		\begin{equation} 
	\begin{tikzcd}[row sep = 1.1cm, column sep=1.1cm]
	G\text{-}\catf{TFT}_{n+r} \ar[swap]{d}{-/G} \ar[]{r}{\Red_T }  &   BG^T\text{-}\catf{TFT}_{n}  \ar{d}{-/BG^T}  
\\
\catf{TFT}_{n+r} \ar{r}{\Red_T }    & \catf{TFT}_{n} \   
	\end{tikzcd}
	\end{equation}
	commutes up to natural isomorphism.
\end{theorem}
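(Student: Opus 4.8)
The plan is to show that the two composites in the square agree by unfolding the orbifold construction on both sides into its two constituent pieces, the \emph{change to equivariant coefficients} $\widehat{-}$ and the \emph{parallel section functor} $\Par$, as in the factorization \eqref{eqndeforb}. Concretely, $\Red_T(Z/G) = \Par \circ \widehat{Z/G}$ and $(\Red_T Z)/BG^T = \Par \circ \widehat{\Red_T Z}$, so it suffices to produce a natural isomorphism of $\TVectBun$-valued theories $\widehat{\Red_T Z} \cong \Red_T(\widehat{Z})$, where on the right $\Red_T$ acts on $\Cob$-indexed (rather than $G\text{-}\Cob$-indexed) theories simply by precomposition with $-\times T\colon \Cob_n \to \Cob_{n+r}$. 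Once this is established, applying $\Par\circ -$ to both sides and using that $\Par\circ -$ is itself compatible with precomposition (it is a post-composition operation, hence strictly commutes with precomposing the domain by $-\times T$) yields the theorem.

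The key input for the identification $\widehat{\Red_T Z}\cong \Red_T(\widehat Z)$ is the adjunction $-\times T \dashv -^T$ and the induced description of the relevant mapping spaces and fundamental groupoids. First I would recall that $\widehat{Z}(S\times T)$ is, by definition, the $2$-vector bundle $\Pi(S\times T, BG)\to\Tvs$ obtained by evaluating $Z$ on cylinders $((S\times T)\times[0,1], h)$. Second, the adjunction gives a homeomorphism $BG^{S\times T}\cong (BG^T)^S$, hence an isomorphism of fundamental groupoids $\Pi(S\times T, BG)\cong \Pi(S, BG^T)$, which is precisely the base groupoid of $\widehat{\Red_T Z}(S)$. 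Third, under this identification the cylinder $((S\times T)\times[0,1],h)\colon (S\times T,\xi_0)\to(S\times T,\xi_1)$ in $G\text{-}\Cob_{n+r}$ is carried, via the functor $-\times T$ of \eqref{timesfunctoreqn}, to the cylinder $(S\times[0,1],\widetilde h)\colon (S,\widetilde\xi_0)\to(S,\widetilde\xi_1)$ in $BG^T\text{-}\Cob_n$ — here I use that $(S\times[0,1])\times T\cong (S\times T)\times[0,1]$ compatibly with the adjunction — so evaluating $Z$ on the former equals evaluating $\Red_T Z$ on the latter. This gives the isomorphism of $2$-vector bundles over each object; I would then check the analogous statements for $1$-morphisms (spans of $\Pi(-,BG)$'s built from cobordisms $\Sigma$, which go to spans built from $\Sigma\times T$, again matched by the adjunction applied to $\Sigma$) and for $2$-morphisms.

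The main obstacle I anticipate is purely of a bookkeeping/coherence nature rather than conceptual: verifying that the isomorphisms $\Pi(S\times T, BG)\cong \Pi(S, BG^T)$ assemble into a \emph{symmetric monoidal natural isomorphism} of bicategory-valued functors, i.e.\ that they are compatible with composition of spans (homotopy pullbacks), with the monoidal structure (one needs $(S\sqcup S')\times T\cong (S\times T)\sqcup(S'\times T)$ and the compatibility of the adjunction with disjoint unions, essentially the observation $(BG^T)^{X}\cong BG^{X\times T}$ for connected $X$ used in Remark~\ref{remhqft}), and with all the coherence $2$-cells. This amounts to chasing the naturality of the adjunction $-\times T\dashv -^T$ through the (weak) $2$-functoriality of $\widehat{-}$ and of $\Red_T$; since both $\widehat{-}$ and $-\times T$ are themselves defined ``geometrically'' by the same cylinder/cobordism-with-map constructions, the coherence data match up on the nose once the adjunction isomorphisms are inserted, but writing this out carefully is where the real work lies. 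I would organize it by first treating the strict compatibility at the level of the underlying geometric functors $BG^T\text{-}\Cob_n\to G\text{-}\Cob_{n+r}$ and $\Cob_n\to\Cob_{n+r}$ (which is immediate from the definition of $-\times T$ in \eqref{timesfunctoreqn}), and then transporting this through $Z$, $\widehat{-}$, and $\Par$ using that the latter two are constructed functorially.
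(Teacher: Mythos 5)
Your proposal is correct and follows essentially the same route as the paper: factor the orbifold construction as $\Par\circ\widehat{-}$, observe the $\Par$-square commutes because pre- and postcomposition commute, and fill the $\widehat{-}$-square using the canonical identification $\Pi(S,BG^T)\cong\Pi(S\times T,BG)$ coming from the adjunction $-\times T\dashv -^T$, extended to $1$- and $2$-morphisms. The paper is in fact terser than your write-up and leaves the coherence bookkeeping you flag implicit.
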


Note here that the orbifoldization with respect to $BG^T$ (see \eqref{eqnorbifoldconstructionGamma}) makes sense because the space $BG^T$
can be seen as the classifying space of an essentially finite groupoid, see \cite[Lemma~2.8]{extofk}.

\begin{proof}By construction of the orbifoldization procedure, it suffices to prove that the two squares
		\begin{equation} 
	\begin{tikzcd}[row sep = 1.1cm, column sep=1.4cm]
	G\text{-}\catf{TFT}_{n+r} \ar[swap]{d}{\widehat{-}} \ar[]{r}{\Red_T }  &    BG^T\text{-}\catf{TFT}_{n}  \ar{d}{\widehat{-}}   \\
      \text{$[\Cob_{n+r}, \TVectBun  ]_\otimes$}      \ar[swap]{d}{\Par \circ - }   \ar[]{r}{\Red_T }       & \text{$[\Cob_n, \TVectBun  ]_\otimes$}    \ar{d}{\Par \circ - }  \\
	\catf{TFT}_{n+r} \ar{r}{\Red_T }   & \catf{TFT}_{n} \   
	\end{tikzcd}
	\end{equation}
	(in which the vertical compositions are (by definition) the orbifoldizations with respect to $G$ and $BG^T$, respectively)
	can be filled with a natural isomorphism. For the lower square, this is obvious because the precomposition and postcomposition with functors commute.
	Hence, it remains to prove that the upper square commutes up to natural isomorphism.

	In order to construct the component of the natural isomorphism at an object $Z\colon G\text{-}\Cob_{n+r}\longrightarrow \Tvs$ in
	$	G\text{-}\catf{TFT}_{n+r}$ recall that to an $n-2$-dimensional closed oriented manifold $S$ the $\TVectBun$-valued topological field theory $\widehat{\Red_T Z}$ assigns the 2-vector bundle $\Pi(S,BG^T) \xrightarrow{\  \Red_T Z(S,-)   \ } \TVectBun$. But as follows from $\Pi(S,BG^T)\cong \Pi(S\times T,BG)$ and  the definition of $\Red_T$, this 2-vector bundle is canonically isomorphic (in $\TVectBun$) 
	to the 2-vector bundle $\Pi(S\times T,BG)\xrightarrow{\    Z(S\times T,-)  \ } \TVectBun$. These isomorphisms extend analogously to 1-morphisms and 2-morphisms and yield a canonical isomorphism $\widehat{\Red_T Z}\cong \Red_T \widehat{Z}$. These isomorphisms assemble the natural isomorphism that we need to fill the upper square.
	\end{proof}

For any $n+r$-dimensional extended $G$-equivariant topological field theory $Z\colon G\text{-}\Cob_{n+r}\longrightarrow \Tvs$,
we obtain by the above result a canonical isomorphism \begin{align} \Red_T \frac{Z}{G} \cong  \frac{\Red_T Z}{ BG^T  } \label{eqncomputerhs} \ . \end{align} 
 By \cite[Lemma~2.8]{extofk} we have $BG^T \simeq B \PBun_G(T)\simeq \bigsqcup_{[P] \in \pi_0  (\PBun_G(T))  } B \Aut(P)$, where $\PBun_G(T)$ is the groupoid of $G$-bundles over $T$. Therefore, the $BG^T$-equivariant theory $\Red_T Z$ decomposes into a finite family $\left\{    (  \Red_T Z )^{[P]}\right\}_        {[P] \in \pi_0  (\PBun_G(T))}$ of $\Aut(P)$-equivariant field theories under the equivalence \eqref{eqnequivdisj}. 
 We can use Lemma~\ref{lemorbsum} to express the right hand side of \eqref{eqncomputerhs} in terms of the orbifold theories of the members of this family and hence arrive at the following decomposition:

\begin{theorem}\label{thmdimred2}
	Let $T$ be a closed oriented $r$-dimensional manifold, $G$ a finite group and $Z\colon G\text{-}\Cob_{n+r}\longrightarrow \Tvs$ an $n+r$-dimensional extended $G$-equivariant topological field theory. Then we have a canonical isomorphism
	\begin{align}
	\Red_T \frac{Z}{G} \cong \bigoplus_{[P] \in \pi_0(\PBun_G(T))   }    \frac{(\Red_T Z)^P}{\Aut(P)}     \label{eqnmainthm} 
	\end{align}
	of $n$-dimensional extended topological field theories.
	\end{theorem}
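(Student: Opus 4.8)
The plan is to deduce Theorem~\ref{thmdimred2} directly from the pieces already assembled in the excerpt, namely Theorem~\ref{Thm:main}, Lemma~\ref{lemorbsum}, and the structural facts about $BG^T$ recalled immediately before the statement. First I would invoke Theorem~\ref{Thm:main} to obtain the canonical isomorphism $\Red_T(Z/G)\cong (\Red_T Z)/BG^T$ of \eqref{eqncomputerhs}; this reduces everything to analyzing the orbifold of the $BG^T$-equivariant theory $\Red_T Z$. Next I would bring in the equivalence $BG^T\simeq B\PBun_G(T)\simeq \bigsqcup_{[P]\in\pi_0(\PBun_G(T))} B\Aut(P)$ from \cite[Lemma~2.8]{extofk}, which in particular shows that the essentially finite groupoid $\PBun_G(T)$ decomposes as a disjoint union $\bigsqcup_{[P]} B\Aut(P)$ of its connected components, the sum being finite because $T$ is a closed manifold and $G$ is finite. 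Under the equivalence \eqref{eqnequivdisj}, applied iteratively to this finite disjoint union, the $BG^T$-equivariant theory $\Red_T Z$ corresponds to the family $\{(\Red_T Z)^P\}_{[P]}$ of $\Aut(P)$-equivariant theories; this is precisely the definition of the family members $(\Red_T Z)^P$.

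The heart of the argument is then the application of Lemma~\ref{lemorbsum}. The lemma is stated for a disjoint union of two groupoids $\Gamma\sqcup\Omega$, so I would first note that it extends by an obvious induction to any finite disjoint union $\bigsqcup_{i} \Gamma_i$, giving a commuting square relating $-/\bigsqcup_i\Gamma_i$ on $(\bigsqcup_i\Gamma_i)\text{-}\tft_n$ to the iterated direct sum $\bigoplus_i$ of the individual orbifoldizations $-/\Gamma_i$ under the equivalence $(\bigsqcup_i\Gamma_i)\text{-}\tft_n\simeq\prod_i \Gamma_i\text{-}\tft_n$. Applying this with $\Gamma_i = B\Aut(P_i)$ ranging over the (finitely many) isomorphism classes of $G$-bundles over $T$, and noting that $-/B\Aut(P)$ is by definition the ordinary orbifoldization $-/\Aut(P)$ with respect to the finite group $\Aut(P)$ (as already observed below \eqref{eqnorbifoldconstructionGamma} and in the consequence drawn after Lemma~\ref{lemorbsum}), we obtain
\begin{align}
\frac{\Red_T Z}{BG^T} \cong \frac{\Red_T Z}{\PBun_G(T)} \cong \bigoplus_{[P]\in\pi_0(\PBun_G(T))} \frac{(\Red_T Z)^P}{\Aut(P)} \ .
\end{align}
Here the first isomorphism uses that orbifoldization depends on $BG^T$ only through its homotopy type, equivalently through the equivalence class of the groupoid $\PBun_G(T)$, and the second is the (iterated) Lemma~\ref{lemorbsum} combined with the identification of the family members.

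Chaining these isomorphisms with \eqref{eqncomputerhs} yields the desired canonical isomorphism \eqref{eqnmainthm}, and naturality is inherited at each stage: Theorem~\ref{Thm:main} provides a natural isomorphism of functors, the equivalence \eqref{eqnequivdisj} is natural, and Lemma~\ref{lemorbsum} is a commuting square of functors, so the composite isomorphism is natural in $Z$ as well (though for the statement only the isomorphism for each fixed $Z$ is claimed). The finiteness of the decomposition is exactly the finiteness of $\pi_0(\PBun_G(T))$, which holds since $\PBun_G(T)$ is essentially finite. I do not anticipate a serious obstacle here: the real content has been front-loaded into Theorem~\ref{Thm:main} and Lemma~\ref{lemorbsum}. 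The only point requiring a modicum of care is the passage from the two-fold disjoint union in Lemma~\ref{lemorbsum} to an arbitrary finite one, and the bookkeeping that the orbifoldization $-/BG^T$ — defined via the classifying space of an essentially finite groupoid — genuinely matches $-/\PBun_G(T)$ and hence splits along connected components; both are routine given the invariance of the construction under equivalence of groupoids.
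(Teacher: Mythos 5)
Your proposal is correct and follows essentially the same route as the paper: apply Theorem~\ref{Thm:main} to get \eqref{eqncomputerhs}, decompose $BG^T\simeq\bigsqcup_{[P]}B\Aut(P)$ via \cite[Lemma~2.8]{extofk} and the equivalence \eqref{eqnequivdisj}, and conclude with Lemma~\ref{lemorbsum}. Your added care about iterating the binary case of Lemma~\ref{lemorbsum} to finite disjoint unions and about invariance under equivalence of groupoids is a reasonable elaboration of what the paper leaves implicit.
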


For $T=\mathbb{S}^1$ (one of the most common choices for dimensional reductions), the bundle groupoid $\PBun_G(\mathbb{S}^1)$ is equivalent to the loop groupoid $G//G$ of $G$, i.e.\ the action groupoid for the action of $G$ on itself by conjugation. As a consequence, we find
	\begin{align}\label{eqnsphere}
\Red_{\mathbb{S}^1} \frac{Z}{G} \cong \bigoplus_{ [g] \in G/G   }    \frac{(\Red_   {\mathbb{S}^1}  Z)^g}{\cen (g) } \ ,   
\end{align}
where $G/G$ is the set of conjugacy classes of $G$ and $\cen(g)\subset G$ is the centralizer group of an element $g\in G$. 
Since the centralizer of the neutral element is $G$, the dimensional 
reduction of the orbifold theory $Z/G$ 
contains the summand  $(\Red_{\mathbb{S}^1} Z   )^e/G$  corresponding
to the na\" ive reduction that just allows $G$-bundles which are trivial in  $\mathbb{S}^1$-direction.

\begin{remark}\label{remcatint}
	Let $\Gamma$ be an essentially finite groupoid and $f: \Gamma \to \mathbb{C}$ and invariant function on $\Gamma$, i.e.\ a complex-valued function on the object set such that $f(x)\cong f(y)$ whenever $x\cong y$ for $x,y \in \Gamma$. Then we can define the \emph{integral of $f$ with respect to groupoid cardinality} by
	\begin{align} \int_{ \Gamma} f(x)\,\text{d} x := \sum_{[x]\in \pi_0(\Gamma)} \frac{f(x)}{|\! \Aut(x)|} \ , \label{eqnintgrpdcard}      \end{align} see \cite{baezgroup} for more background on this primitive, but surprisingly powerful integration theory. 
	
	It is hard to overlook the formal resemblance between this integral and the right hand side of \eqref{eqnmainthm}. In fact, we can interpret the right hand side of \eqref{eqnmainthm} as a \emph{categorified integral} over the essentially finite groupoid $\PBun_G(T)$, where the `function' to be integrated is $P \mapsto (\Red_T Z)^P$. This function takes values in field theories. 
	Compared to \eqref{eqnintgrpdcard}, the sum is replaced by a direct sum and the division by the sizes of automorphism groups is replaced by topological orbifoldization. 
	\end{remark}

\section{Application to Dijkgraaf-Witten theories}\label{Sec: DW}
For any group $G$, a $d$-cocycle $\omega\in Z^d(BG;\U(1))$ gives rise to a $d$-dimensional $G$-equivariant topological field theory.
The non-extended version of this field theory is constructed in \cite[I.2.1]{turaevhqft}. 
The extended version that we will denote by
\begin{align}
E_\omega : G\text{-}\Cob_d \to \Tvs \label{eqnprimitive}
\end{align} 
is constructed in \cite{MuellerWoike1}. 
 Its value on a closed oriented $d$-dimensional manifold $M$ 
equipped with a map $\psi \colon M \longrightarrow BG$ 
is the number $\langle \psi^* \omega,\sigma_M \rangle \in \U(1)$, where $\sigma_M$ is a representative for the fundamental class of $M$ and  $\langle -, - \rangle$ is the  pairing between cochains
and chains. This number is often suggestively written as an integral
$\int_M \psi^* \omega \in \U(1)$. 
Both in \cite{turaevhqft} and \cite{MuellerWoike1}, these field theories can be constructed from cocycles on arbitrary spaces.

In order to describe the dimensional reduction of the field theories \eqref{eqnprimitive},
 we briefly recall
the concept of transgression. Let $T$ be a $r$-dimensional closed 
oriented manifold and $\sigma_T\in Z_r(T)$ a representative of its 
fundamental class. 
The transgression of a $d$-cocycle $\omega$ on $BG$ to $BG^T$ is
the $d-r$ cocycle $\tau \omega$ on $BG^T$ given by
\begin{align}
(\tau \omega) (s)= \ev^*\omega (s\times \sigma_T) \ \ ,
\end{align}  
where $s$ is a $d-r$-chain on $BG^T$
and $\ev \colon BG^T\times T \longrightarrow BG$ is the evaluation map.

\begin{proposition}\label{proptransgression}
Let $E_\omega$ be the extended
$G$-equivariant topological field theory from \eqref{eqnprimitive} associated to a cocycle $\omega\in Z^{n+r}(BG;\U(1))$ 
and $T$ a closed oriented $r$-dimensional manifold. Then the $T$-reduction $\Red_T E_\omega\colon BG^T\text{-}\Cob_n\longrightarrow \Tvs$ 
is naturally isomorphic to $E_{\tau \omega}$.
\end{proposition}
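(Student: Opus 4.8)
The plan is to show that the two $G$-equivariant theories $\Red_T E_\omega$ and $E_{\tau\omega}$ agree by checking that they take the same values on top-dimensional $BG^T$-cobordisms, since both are determined by such values (being built from a cocycle in the Turaev/Müller--Woike style, where the extended theory is reconstructed from its value on closed $n$-manifolds with a map to the target). First I would unwind the definition of $\Red_T E_\omega$ on a closed oriented $n$-manifold $M$ equipped with a map $\xi\colon M\to BG^T$: by definition this is $E_\omega$ evaluated on $M\times T$ with the adjoint map $\widehat{\xi}\colon M\times T\to BG$, which is the number $\langle \widehat{\xi}^{\,*}\omega, \sigma_{M\times T}\rangle\in\U(1)$. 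Here $\sigma_{M\times T}$ is a representative of the fundamental class of $M\times T$, and the natural choice is $\sigma_{M\times T}=\sigma_M\times\sigma_T$ (a cross product of chain-level representatives of the two fundamental classes, using the product orientation).

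The key computation is then to identify $\langle \widehat{\xi}^{\,*}\omega, \sigma_M\times\sigma_T\rangle$ with $\langle \xi^*(\tau\omega),\sigma_M\rangle$, which is exactly the value of $E_{\tau\omega}$ on $(M,\xi)$. This is a cochain-level manipulation: the adjoint map $\widehat{\xi}$ factors as $M\times T \xrightarrow{\xi\times\id_T} BG^T\times T \xrightarrow{\ev} BG$, so $\widehat{\xi}^{\,*}\omega = (\xi\times\id_T)^*\ev^*\omega$; pairing against $\sigma_M\times\sigma_T$ and using the compatibility of the cross product with evaluation (the slant product / Fubini-type identity $\langle (\xi\times\id)^*\ev^*\omega,\sigma_M\times\sigma_T\rangle = \langle \xi^*(\ev^*\omega/\sigma_T),\sigma_M\rangle$) gives precisely $\langle\xi^*(\tau\omega),\sigma_M\rangle$ by the definition of transgression recalled just above the proposition. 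Next I would note that this identification is independent of the chosen chain-level representatives (changing $\sigma_M$ or $\sigma_T$ by a boundary changes the pairing by a coboundary term, which does not affect the $\U(1)$-value since $\omega$ is a cocycle), and that the same slant-product identity, applied to cobordisms rather than closed manifolds, makes the identification compatible with gluing; hence it upgrades to a symmetric monoidal natural isomorphism of the full extended theories, and in particular respects the homotopy invariance property.

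The main obstacle I anticipate is bookkeeping at the cochain level rather than anything conceptual: one must be careful that the cross product $\sigma_M\times\sigma_T$ genuinely represents the fundamental class of $M\times T$ with the orientation convention implicit in the functor $-\times T$ of \eqref{timesfunctoreqn}, and that the slant-product manipulation is set up with signs consistent with the definition of $\tau\omega$ as given (where the $r$-chain $\sigma_T$ is slanted out first). A secondary point is to make the argument on 1- and 2-morphisms precise: for cobordisms $\Sigma\colon S_0\to S_1$ one works with relative fundamental classes and must check that the slant-product identity is natural with respect to the restriction-to-boundary maps, so that the isomorphisms $\Red_T E_\omega(\Sigma,\varphi)\cong E_{\tau\omega}(\Sigma,\varphi)$ assemble coherently; this is routine given that everything is ultimately a linear-algebra consequence of the reconstruction of these theories from cocycle data, as established in \cite{MuellerWoike1}, but it is where the bulk of the verification lies.
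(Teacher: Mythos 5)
Your proposal is correct and follows essentially the same route as the paper: both unwind $\Red_T E_\omega$ on a closed $n$-manifold $(M,\xi)$, factor the adjoint map through $\ev\colon BG^T\times T\to BG$, and pair $\ev^*\omega$ against the product of fundamental-class representatives to recover $\langle \xi^*(\tau\omega),\sigma_M\rangle$ by the definition of transgression. The paper likewise presents the argument only at the level of closed top-dimensional manifolds and defers the extension to the full theories with a remark that ``similar arguments apply,'' so your additional attention to representatives, orientations, and the behaviour on 1- and 2-morphisms is if anything more careful than the printed proof.
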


\begin{proof}
We only present the proof on the level of closed oriented $n$-dimensional manifolds.
Similar arguments can be applied to the full theories. Let $M$ be a closed $n$-dimensional manifold 
equipped with a map $\psi \colon M \longrightarrow BG^T$.
Then the value of $\Red_T E_\omega$ on $(M,\psi)$ is 
\begin{align}
\int_{M\times T} (\ev \circ (\psi\times \id_T))^* \omega = \langle  \ev^* \omega, \psi_* \sigma_M \times \sigma_T  \rangle = \langle \tau \omega, \psi_*\sigma_M \rangle = \int_M \psi^* (\tau \omega) \ \ , 
\end{align}  
where $\sigma_M$ and $\sigma_T$ is a representative for the fundamental class of
$M$ and $T$, respectively.
\end{proof}

The singular cohomology of an aspherical space can be identified with
the groupoid cohomology of its fundamental groupoid. We refer to
\cite{willerton} for a discussion and concrete formulae for the transgression
to loop groupoids. 

Dijkgraaf-Witten theories 
\cite{dw,freedquinn,morton1}
are topological field theories which describe gauge
theories with finite structure group $G$. 
The twisted $n$-dimensional  Dijkgraaf-Witten theory
that we denote by
\begin{align}
\DW _G^\omega:\Cob_n \to \Tvs 
\end{align}
 depends on the choice of a topological action functional 
which can be described by a $\U(1)$-valued $n$-cocycle $\omega$ on $BG$. 
The relation between Dijkgraaf-Witten theories and the content of this note is the fact that they can be written as an orbifold theory,
\begin{align}
\DW _G^\omega\cong \frac{E_\omega}{G} \ , \label{eqndworb}
\end{align}
see
\cite[Remark~4.5~(1)]{swofk} for the statement on the level of non-extended field theories and
\cite[Section~5]{MuellerWoike1} for the extended case. 
From Theorem~\ref{thmdimred2}, Proposition~\ref{proptransgression} and \eqref{eqndworb} we directly conclude:

\begin{theorem}
For any	 closed oriented $r$-dimensional manifold $T$, any finite group $G$ and any cocycle $\omega \in Z^{n+r}(BG;\U(1))$, the $T$-reduction of the $\omega$-twisted Dijkgraaf-Witten theory $\DW_G^\omega$ is given by
\begin{align}
\Red_T \DW_G^\omega  \cong \bigoplus_{[P] \in \pi_0(\PBun_G(T))   }  \DW_{\Aut(P)}^{\tau \omega|_{   \Aut (P)  }}   \ .
\end{align}
\end{theorem}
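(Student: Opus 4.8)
The plan is to deduce this final theorem by simply chaining together the three inputs that the paper has already established, so the proof should be short. First I would invoke the orbifold description \eqref{eqndworb} of Dijkgraaf-Witten theory, $\DW_G^\omega \cong E_\omega / G$, to rewrite the left-hand side as $\Red_T (E_\omega/G)$. Next I would apply Theorem~\ref{thmdimred2} to the $G$-equivariant theory $Z = E_\omega$, yielding
\begin{align}
\Red_T \frac{E_\omega}{G} \cong \bigoplus_{[P] \in \pi_0(\PBun_G(T))} \frac{(\Red_T E_\omega)^P}{\Aut(P)} \ .
\end{align}
Then I would use Proposition~\ref{proptransgression}, which identifies $\Red_T E_\omega$ with $E_{\tau\omega}$ as a $BG^T$-equivariant theory, to replace each summand's numerator.

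The one point that needs a little care is the interaction between the decomposition $BG^T \simeq \bigsqcup_{[P]} B\Aut(P)$ and the cocycle $\tau\omega$. The theory $E_{\tau\omega}$ is a $BG^T$-equivariant (equivalently, $\PBun_G(T)$-equivariant) field theory built from the transgressed cocycle $\tau\omega \in Z^n(BG^T;\U(1))$. Under the equivalence \eqref{eqnequivdisj} applied to the decomposition of $BG^T$ into its connected components, I would check that the component $(E_{\tau\omega})^P$ is precisely $E_{\tau\omega|_{\Aut(P)}}$, the $\Aut(P)$-equivariant Dijkgraaf-Witten primitive associated to the restriction of $\tau\omega$ along $B\Aut(P) \hookrightarrow BG^T$. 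This is essentially a naturality/restriction statement for the construction $\omega \mapsto E_\omega$ with respect to maps of target spaces (here the inclusion of a connected component), which is built into the construction in \cite{MuellerWoike1}; concretely, on a closed $n$-manifold $M$ with map $\psi\colon M \to B\Aut(P)$ both sides evaluate to $\langle (\iota_P \circ \psi)^*\tau\omega, \sigma_M\rangle = \langle \psi^*(\tau\omega|_{\Aut(P)}), \sigma_M\rangle$.

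Having made that identification, I would combine it with \eqref{eqndworb} once more, now in the form $\DW_{\Aut(P)}^{\tau\omega|_{\Aut(P)}} \cong E_{\tau\omega|_{\Aut(P)}}/\Aut(P)$, so that each summand $(\Red_T E_\omega)^P / \Aut(P) \cong E_{\tau\omega|_{\Aut(P)}}/\Aut(P) \cong \DW_{\Aut(P)}^{\tau\omega|_{\Aut(P)}}$. Assembling the direct sum over $[P] \in \pi_0(\PBun_G(T))$ gives the claimed isomorphism. All of the isomorphisms used are natural, so the composite is a canonical isomorphism of $n$-dimensional extended topological field theories.

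The main obstacle, such as it is, is purely bookkeeping: making sure the transgressed cocycle restricts compatibly under the equivalence of Remark~\ref{remhqft}, i.e.\ that the splitting of the equivariant theory $\Red_T E_\omega$ into pieces indexed by $[P]$ matches the splitting of $\tau\omega$ into its restrictions $\tau\omega|_{\Aut(P)}$ along the inclusions of the connected components of $B\PBun_G(T)$. Once that is granted, the theorem follows formally from Theorem~\ref{thmdimred2}, Proposition~\ref{proptransgression} and \eqref{eqndworb} with no further computation.
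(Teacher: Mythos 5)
Your proposal matches the paper's argument exactly: the theorem is obtained by chaining \eqref{eqndworb}, Theorem~\ref{thmdimred2} and Proposition~\ref{proptransgression}, with the extra observation (which the paper leaves implicit) that the component of $E_{\tau\omega}$ over $[P]$ is $E_{\tau\omega|_{\Aut(P)}}$. Your added care on that restriction step is a sensible elaboration rather than a deviation.
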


In the special case
$T=\mathbb{S}^1$ that was treated in \eqref{eqnsphere},
we prove a recent conjecture of Qiu-Wang \cite{qw}:

\begin{corollary}\label{correddw}
The dimensional reduction of the $\omega$-twisted Dijkgraaf-Witten theory $\DW_G^\omega$ is given by
\begin{align}\label{Eq: DW S1}
\Red_{    \mathbb{S}^1       } \DW_G^\omega \cong \bigoplus_{[g] \in G/G   }  \DW_{\cen(g)}^{\tau \omega|_{   \cen(g)  }}  \ .
\end{align}
\end{corollary}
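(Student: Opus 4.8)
The plan is to obtain Corollary~\ref{correddw} as the specialization of the preceding theorem to the case $T = \mathbb{S}^1$, so the only real content is to identify the indexing set and the twisting cocycles explicitly. First I would recall the well-known equivalence of groupoids $\PBun_G(\mathbb{S}^1) \simeq G/\!/G$, the action groupoid for $G$ acting on itself by conjugation: a $G$-bundle over the circle is classified up to isomorphism by the holonomy around the loop, which is well-defined up to conjugation, and a bundle automorphism covering the identity corresponds to an element of $G$ conjugating the holonomy. Consequently $\pi_0(\PBun_G(\mathbb{S}^1)) \cong G/G$, the set of conjugacy classes, and for a bundle $P$ with holonomy $g$ one has $\Aut(P) \cong \cen(g)$, the centralizer of $g$ in $G$. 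This already rewrites the index set and the groups $\Aut(P)$ appearing in Theorem~\ref{thmdimred2} (applied to $Z = E_\omega$) in the form appearing in \eqref{Eq: DW S1}.

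Next I would pin down the twist. Theorem~\ref{thmdimred2} together with Proposition~\ref{proptransgression} tells us that the summand attached to $[P]$ is the $\Aut(P)$-orbifold of $(\Red_{\mathbb{S}^1} E_\omega)^P$, and $\Red_{\mathbb{S}^1} E_\omega \cong E_{\tau\omega}$ where $\tau\omega \in Z^n(BG^{\mathbb{S}^1}; \U(1))$ is the transgression of $\omega$ to the free loop space $BG^{\mathbb{S}^1} \simeq B(G/\!/G)$. Under the decomposition \eqref{eqnequivdisj} induced by $B(G/\!/G) \simeq \bigsqcup_{[g]} B\cen(g)$, the family member $(\Red_{\mathbb{S}^1} E_\omega)^P$ is simply $E_{\tau\omega}$ restricted to the component $B\cen(g)$, i.e.\ $E_{\tau\omega|_{\cen(g)}}$ where $\tau\omega|_{\cen(g)}$ is the restriction of the transgressed cocycle along $B\cen(g) \hookrightarrow B(G/\!/G)$. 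Applying the orbifold description of Dijkgraaf--Witten theory \eqref{eqndworb}, now in the group $\cen(g)$ with cocycle $\tau\omega|_{\cen(g)}$, identifies $(\Red_{\mathbb{S}^1} E_\omega)^P/\Aut(P) \cong \DW_{\cen(g)}^{\tau\omega|_{\cen(g)}}$. Substituting this into \eqref{eqnmainthm} (or directly into the already-derived \eqref{eqnsphere}) yields \eqref{Eq: DW S1}.

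I would then remark, for concreteness and to match the literature, that via the identification of the cohomology of the aspherical space $BG$ with the groupoid cohomology of $\bullet/\!/G$ and of $BG^{\mathbb{S}^1}$ with that of the loop groupoid $G/\!/G$, the transgression $\tau\omega$ and its restriction to $\cen(g)$ admit the explicit cocycle formulae of Willerton~\cite{willerton}; for trivial $\omega$ one recovers the untwisted statement $\Red_{\mathbb{S}^1}\DW_G \cong \bigoplus_{[g]\in G/G}\DW_{\cen(g)}$ announced in the introduction. I do not expect any genuine obstacle here: the mathematical work has all been done in Theorem~\ref{thmdimred2} and Proposition~\ref{proptransgression}, and the corollary is bookkeeping. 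The one point requiring a little care is the compatibility of the two decompositions — that the abstract splitting of the $BG^{\mathbb{S}^1}$-equivariant theory from \eqref{eqnequivdisj} into pieces indexed by $\pi_0$ agrees, component by component, with the concrete description of $E_{\tau\omega}$ restricted to each $B\cen(g)$ as the Dijkgraaf--Witten cocycle theory for the restricted twist — but this is immediate from the naturality of the constructions $E_{(-)}$ and $\widehat{(-)}$ with respect to maps of target spaces (here the inclusions $B\cen(g)\hookrightarrow BG^{\mathbb{S}^1}$), exactly as used in the proof of Remark~\ref{remhqft}.
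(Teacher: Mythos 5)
Your proposal is correct and follows exactly the paper's route: specialize Theorem~\ref{thmdimred2} (in the form \eqref{eqnsphere}, using $\PBun_G(\mathbb{S}^1)\simeq G/\!/G$, so $\pi_0\cong G/G$ and $\Aut(P)\cong\cen(g)$) to $Z=E_\omega$, identify $\Red_{\mathbb{S}^1}E_\omega\cong E_{\tau\omega}$ via Proposition~\ref{proptransgression}, and recognize each summand as a Dijkgraaf--Witten theory via the orbifold description \eqref{eqndworb}. No substantive difference from the paper's argument.
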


The precise conjecture in \cite{qw} concerns the untwisted case and a reduction from dimension $(4,3,2)$ to $(3,2,1)$.
The statement, however, holds independently of the dimension and also with cohomological twists.

\begin{remark}
We conclude with a few comments on the relation of our results to 
Conjecture~1.2 of \cite{KTZ} which says that the center of the $n$-category $n\catf{Vect}_G^\omega$
of $G$-graded $n$-vector spaces twisted by an $n+2$-cocycle $\omega\in Z^{n+2}(BG;\U(1))$ is given by
\begin{align}\label{Eq: Conjecture}
Z(n\catf{Vect}_G^\omega) \cong \bigoplus_{[g]\in G/G} n\catf{Rep}^{\tau \omega|_{\cen(g)}}(\cen(g)) \ ,              
\end{align} 
where $n\catf{Rep}^{\tau \omega|_{\cen(g)}}(\cen(g))$ is the category of $\tau \omega|_{\cen(g)}$-twisted $n$-representations of $\cen(g)$. 

The fact that the right hand sides of \eqref{Eq: DW S1} and \eqref{Eq: Conjecture} have a similar form
is not a coincidence. 
In fact, based on Corollary~\ref{correddw}, we can suggest a strategy for a topological proof of \eqref{Eq: Conjecture}:
 It is expected that the $\omega$-twisted Dijkgraaf-Witten theory for the group $G$ admits an extension to a fully extended field theory with values in some suitable target category. Furthermore, the algebraic object describing this fully
extended $n+2$-dimensional field theory (its value at the point) is expected to be $n\catf{Vect}_G^\omega$. One expects 
that there also exists a dual description of the field theory in terms of the 
higher category $(n+1)\catf{Rep}^\omega(G)$. The relation between the
two descriptions is by identifying $(n+1)\catf{Rep}^\omega(G)$ with
the higher category of modules over $n\catf{Vect}_G^\omega$.  
If we now assume that a result analogous to \eqref{Eq: DW S1} can be established for the dimensional reduction of the fully extended Dijkgraaf-Witten theory on $\mathbb{S}^1$, we can evaluate both sides on the point. 
On the left hand side, we would find the value of the fully extended Dijkgraaf-Witten on the circle, which by general principles in topological field theories would be the center $Z(n\catf{Vect}_G^\omega)$. The right hand side would be the right hand side of \eqref{Eq: Conjecture} by the mentioned description of Dijkgraaf-Witten theories in terms of twisted representations.
Making this argument or even any of its ingredients precise is far beyond the scope of this short note.    
\end{remark}

\small

\end{document}